\renewenvironment{proof}[1][\proofname]{\par
  \pushQED{\qed}
  \normalfont \partopsep=\z@skip \topsep=\z@skip
  \trivlist
  \item[\hskip\labelsep
    \itshape
  #1\@addpunct{.}]\ignorespaces
}{
  \popQED\endtrivlist\@endpefalse\vspace{0.3em}
}
\newcommand{\M}{\mathbb{M}}
\newcommand{\N}{\mathbb{N}}
\renewcommand{\P}{\mathbb{P}}
\newcommand{\T}{\mathbb{T}}
\newtheorem{proposition}{Proposition}
\newtheorem{question}{Question}
\begin{document}

\title[A note on OEIS sequence A111384]{A note on OEIS sequence A111384}
\author{Markus Sigg}
\address{Freiburg, Germany}
\email{mail@markussigg.de}
\date{September 26, 2023.}

\begin{abstract}
  $A111384(n)$ is an upper bound for the number of primes that can be written
  as a sum of three distinct primes selected from a set of $n$ primes. Is this
  bound sharp?
\end{abstract}

\maketitle

{
  AMS subjects classification 2010: 05A20,11B83,11L20.\\
  \small Keywords: Encyclopedia of Integer Sequences, Prime Sums of Primes.
}

\section{Introduction}

With the natural numbers $\N := \{0,1,2,\dots\}$, let $\T := \{ 3 \} \cup (\N
\setminus 3\N)$. For non-empty $\M \subset \T$ and finite $A \subset \M$ set
\[
  S_\M(A) \ := \ \M \cap \{ a + b + c : a,b,c \in A, \ a < b < c \}
\]
and for $n \in \N$:
\[
  s_\M(n) \ := \ \max \, \{ |S_\M(A)| : A \subset \M, \ |A| = n \}
\]

We shall prove that OEIS sequence A111384, see \cite{A111384}, gives an upper
bound for $s_\M(n)$, i.\,e.
\[
  s_\M(n) \ \le \ A111384(n) \ = \
  \binom{n}{3} -
  \binom{\lfloor \frac n2 \rfloor}{3} -
  \binom{\lceil  \frac n2 \rceil}{3} ,
\]
which therefore is true in particular for $\M = \P$, the set of prime numbers.
Furthermore we will show that this inequality is in fact an equality in the
case of $\M = \T$, and dare to ask if it is an equality even in the case of
$\M = \P$.

Let's remark beforehand that A111384 starts A111384(0) = A111384(1) = A111384(2)
= 0, and in general
\begin{equation} \label{eq1}
  A111384(n) = \begin{cases}
    \displaystyle \frac18 (n-2)n^2     & \text{for even $n$},\\[1em]
    \displaystyle \frac18 (n-2)(n^2-1) & \text{for odd $n$},
  \end{cases}
\end{equation}
thus for all $n \in \N$:
\begin{equation} \label{eq2}
  A111384(n) \ \ge \ \frac18 (n-2)(n^2-1)
\end{equation}

\section{Statements}

\begin{proposition} \label{prop}
  $s_\M(n) \le A111384(n)$ for all $\M \subset \T$ and $n \in \N$.
\end{proposition}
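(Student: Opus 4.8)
The plan is to bound $|S_\M(A)|$, for an arbitrary $A\subset\M$ with $|A|=n$, first by the number of $3$-element subsets of $A$ whose element sum is not divisible by $3$, and then to prove the combinatorial inequality this produces. If $n\le 2$ there are no admissible triples and both sides are $0$, so assume $n\ge 3$. Every value in $S_\M(A)$ equals $a+b+c$ for some $3$-subset $\{a,b,c\}\subset A$; picking one such subset for each value gives an injection of $S_\M(A)$ into the family of $3$-subsets $\{a,b,c\}\subset A$ with $a+b+c\in\M$. Since $\M\subset\T$ and (as $0\notin\T$) the three smallest elements of $\T$ are $1,2,3$, every such sum satisfies $a+b+c\ge 6>3$; because $\T\setminus\{3\}=\N\setminus 3\N$, a number in $\M$ exceeding $3$ is not divisible by $3$. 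Hence $|S_\M(A)|$ is at most the number of $3$-subsets $\{a,b,c\}\subset A$ with $a+b+c\not\equiv 0\pmod{3}$.

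Next I would count those subsets via residues modulo $3$. Set $n_i:=\#\{a\in A:a\equiv i\pmod{3}\}$, so $n_0+n_1+n_2=n$. A $3$-subset $\{a,b,c\}$ satisfies $a+b+c\equiv 0\pmod{3}$ precisely when its residue multiset is $\{0,0,0\}$, $\{1,1,1\}$, $\{2,2,2\}$ or $\{0,1,2\}$, so the number of such subsets is $\binom{n_0}{3}+\binom{n_1}{3}+\binom{n_2}{3}+n_0n_1n_2$, and therefore
\[
  \#\bigl\{\{a,b,c\}\subset A : a+b+c\not\equiv 0\pmod{3}\bigr\}
  \ = \ \binom{n}{3}-\binom{n_0}{3}-\binom{n_1}{3}-\binom{n_2}{3}-n_0n_1n_2 .
\]
By the formula for $A111384(n)$ it then remains to prove the purely combinatorial inequality
\[
  \binom{n_0}{3}+\binom{n_1}{3}+\binom{n_2}{3}+n_0n_1n_2
  \ \ge \ \binom{\lfloor n/2\rfloor}{3}+\binom{\lceil n/2\rceil}{3}
\]
for all $n_0,n_1,n_2\in\N$ with $n_0+n_1+n_2=n$. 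This is the crux of the argument.

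To handle it, assume without loss of generality $n_0\ge n_1\ge n_2$. From the Vandermonde-type identity $\binom{n_1+n_2}{3}=\binom{n_1}{3}+\binom{n_2}{3}+\binom{n_1}{2}n_2+n_1\binom{n_2}{2}$ and the estimate $\binom{n_1}{2}n_2+n_1\binom{n_2}{2}=\tfrac12 n_1n_2(n_1+n_2-2)\le n_0n_1n_2$ — valid because $n_1+n_2\le 2n_0$ — one obtains $\binom{n_1}{3}+\binom{n_2}{3}+n_0n_1n_2\ge\binom{n_1+n_2}{3}$. Consequently the left-hand side above is at least $\binom{n_0}{3}+\binom{n_1+n_2}{3}$; since $n_0+(n_1+n_2)=n$ and $x\mapsto\binom{x}{3}$ is convex on $\N$, the quantity $\binom{a}{3}+\binom{b}{3}$ subject to $a+b=n$ is minimised by the balanced split $a=\lfloor n/2\rfloor$, $b=\lceil n/2\rceil$, so $\binom{n_0}{3}+\binom{n_1+n_2}{3}\ge\binom{\lfloor n/2\rfloor}{3}+\binom{\lceil n/2\rceil}{3}$. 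Chaining the three displays gives $|S_\M(A)|\le A111384(n)$ for every $A$, hence $s_\M(n)\le A111384(n)$.

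The only step that is not pure bookkeeping is the combinatorial inequality, and inside it the one real idea is to merge the two \emph{smallest} residue classes: this is exactly what makes the surplus ``mixed'' triples created by the merge fit under the available cross term $n_0n_1n_2$ (the inequality $n_1+n_2\le 2n_0$), after which convexity does the rest. Everything else — the injection from $S_\M(A)$ into triples, the observation that a triple sum is at least $6$ and hence never equals $3$, and the residue count — is routine.
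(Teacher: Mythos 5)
Your proof is correct. It rests on the same basic observation as the paper's --- a triple of distinct elements of $\T$ sums to at least $6$, so its sum can lie in $\M \subset \T$ only if it is not divisible by $3$, and one then counts triples by residue classes mod $3$ --- but the execution is genuinely different and, in my view, cleaner. The paper splits into the cases $3 \notin A$ and $3 \in A$: in the first it counts only the monochromatic triples inside $A_1$ and $A_2$ and maximizes an explicit quadratic $f_n(\alpha)$; in the second it treats the triples containing $3$ by a separate pair-counting argument, maximizes a second quadratic $g_n(\alpha)$, and must then verify $v(n) < A111384(n)$ via inequality (\ref{eq2}). You instead absorb both cases into the single symmetric inequality
\[
  \binom{n_0}{3}+\binom{n_1}{3}+\binom{n_2}{3}+n_0n_1n_2 \ \ge\ \binom{\lfloor n/2\rfloor}{3}+\binom{\lceil n/2\rceil}{3},
  \qquad n_0+n_1+n_2=n,
\]
which you prove by merging the two smallest classes through the Vandermonde identity (the surplus mixed triples being covered by the cross term $n_0n_1n_2$ precisely because $n_1+n_2\le 2n_0$) and then invoking discrete convexity of $k\mapsto\binom{k}{3}$. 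All steps check out, including the degenerate cases $n_1n_2=0$ and small $n_i$ where the binomial coefficients vanish. What your route buys is uniformity (no case distinction on whether $3\in A$, and indeed a statement valid for arbitrary $n_0$, even though in the application $n_0\le 1$) and a structural explanation of why the balanced two-class split is extremal; what the paper's route buys is entirely elementary bookkeeping with explicit parabola maxima and no appeal to convexity or Vandermonde, at the cost of the extra case analysis.
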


\begin{proof}
  This is trivial for $n < 3$, so let $n \ge 3$ and $A \subset \M$ with
  $|A| = n$. For $m \in \{0,1,2\}$ set
  $A_m := \{ a \in A : a \equiv m \pmod 3 \}$ and $\alpha_m := |A_m|$.
  We have to show $|S_\M(A)| \le A111384(n)$.

  The case of $3 \not\in A$: Here, $A = A_1 \uplus A_2$. There are
  $t(n) := n(n-1)(n-2)/6$ triples $(a,b,c)$ with $a,b,c \in A$ and $a < b < c$,
  see \cite{A000292}. Because $3 \ | \ a + b + c$ for $a,b,c \in A_1$ or
  $a,b,c \in A_2$, at least
  $t(\alpha_1) + t(\alpha_2) = t(\alpha_1) + t(n-\alpha_1)$ of these triples
  cannot contribute to $S_\M(A)$, hence
  \[
    |S_\M(A)|
    \ \le \ t(n) - t(\alpha_1) - t(n-\alpha_1)
    \ = \ f_n(\alpha_1),
  \]
  where for $\alpha \in \{0, \dots, n\}$
  \[
    f_n(\alpha) \ := \ \frac18 (n-2)n^2 - \frac12 (n - 2) \left(\alpha - \frac n2\right)^2.
  \]
  For even $n$, the maximum of $f_n$ is $f_n(n/2) = (n-2)n^2/8$. For odd $n$,
  the maximum of $f_n$ is $f_n((n-1)/2) = (n-2)(n^2-1)/8$. With (\ref{eq1}) we
  get $|S_\M(A)| \le f_n(\alpha_1) \le A111384(n)$.

  The case of $3 \in A$: Here, $A = \{3\} \uplus A_1 \uplus A_2$. For
  $a,b \in A_1 \cup A_2$, $3 + a + b \in \M$ is possible only when
  $a,b \in A_1$ or $a,b \in A_2$, so
  \begin{eqnarray*}
    |S_\M(A)|
    &\le& |S_\M(A_1 \cup A_2)| + \frac12 \alpha_1(\alpha_1-1) + \frac12 \alpha_2(\alpha_2-1)\\
    &\le& f_{n-1}(\alpha_1) + \frac12 \alpha_1(\alpha_1-1) + \frac12 (n-1-\alpha_1)(n-2-\alpha_1)\\
    &=& g_n(\alpha_1),
  \end{eqnarray*}
  where for $\alpha \in [0, n-1]$
  \[
    g_n(\alpha) \ := \ \frac12 (n-1)(n-2) + \frac12 (n-5)(n-1-\alpha)\alpha.
  \]
  The maximum of $g_n$ is $g_n((n-1)/2) = (n^3 - 3n^2 - n + 3)/8 =: v(n)$.
  With inequality (\ref{eq2}) we get $A111384(n) - v(n) \ge (n^2-1)/8 > 0$, so
  $|S_\M(A)| \le g_n(\alpha_1) \le v(n) < A111384(n)$.
\end{proof}

\begin{proposition}
    $s_\T(n) = A111384(n)$ for all $n \in \N$.
\end{proposition}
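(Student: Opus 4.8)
The plan is to lean on Proposition~\ref{prop} and supply a matching lower bound by an explicit construction. Since $s_\T(n) \le A111384(n)$ already holds and $s_\T(n) = 0 = A111384(n)$ for $n < 3$ (here $S_\T(A) = \emptyset$ whenever $|A| < 3$), I only need, for each $n \ge 3$, a set $A \subset \T$ with $|A| = n$ and $|S_\T(A)| \ge A111384(n)$; equality then drops out of the sandwich $A111384(n) \le |S_\T(A)| \le s_\T(n) \le A111384(n)$. Reading off the proof of Proposition~\ref{prop}, $A111384(n)$ is precisely the number of triples $a<b<c$ from an $n$-element set that are \emph{not} monochromatic modulo $3$, once the residue classes $1$ and $2$ are shared as evenly as possible. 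So I want $A$ to (a) avoid multiples of $3$, (b) split evenly between residues $1$ and $2$ modulo $3$, and (c) have all sums of three distinct elements pairwise distinct.

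The construction I would use is $A := \{\, 2^i : 1 \le i \le n \,\}$, which meets all three demands almost for free. For (a): no power of $2$ is divisible by $3$, so $A \subset \N \setminus 3\N \subset \T$, and in particular $3 \notin A$ — consistent with the proof of Proposition~\ref{prop}, where including $3$ is strictly suboptimal. For (b): since $2 \equiv -1 \pmod 3$, we have $2^i \equiv 1 \pmod 3$ for even $i$ and $2^i \equiv 2 \pmod 3$ for odd $i$; among the exponents $1, \dots, n$ there are $\lfloor n/2 \rfloor$ even ones and $\lceil n/2 \rceil$ odd ones, so $A$ has $\lfloor n/2 \rfloor$ elements $\equiv 1$ and $\lceil n/2 \rceil$ elements $\equiv 2$ modulo $3$. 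For (c): $2^i + 2^j + 2^k$ with $i < j < k$ is the integer whose binary expansion has $1$-bits exactly at positions $i, j, k$, so distinct triples yield distinct sums.

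To conclude, I would call a triple $a<b<c$ from $A$ \emph{mixed} when its three members do not all lie in one residue class modulo $3$. A mixed triple has $a+b+c \equiv 1+1+2 \equiv 1$ or $1+2+2 \equiv 2 \pmod 3$, hence $a+b+c$ is a positive integer not divisible by $3$ and therefore lies in $\T$; by (c) these sums are pairwise distinct; and by (b) their number is
\[
  \binom{n}{3} - \binom{\lfloor n/2 \rfloor}{3} - \binom{\lceil n/2 \rceil}{3} = A111384(n).
\]
Hence $|S_\T(A)| \ge A111384(n)$, which closes the sandwich above and yields $s_\T(n) = A111384(n)$. (The non-mixed triples contribute nothing, their sums being positive multiples of $3$ and hence outside $\T$, but the lower bound does not use this.)

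The only spot that calls for an idea rather than routine checking is producing a set that is at once evenly balanced modulo $3$ and has all $3$-element subset-sums distinct (a $B_3$-set); I expect that to be the main obstacle. A generic, widely spaced $B_3$-set settles (c) but forcing its residues to split as $\lfloor n/2 \rfloor, \lceil n/2 \rceil$ without spoiling (c) would take some bookkeeping — the powers of $2$ sidestep all of it, thanks to the accident $2 \equiv -1 \pmod 3$.
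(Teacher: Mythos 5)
Your proof is correct and follows essentially the same strategy as the paper: combine the upper bound from Proposition~\ref{prop} with an explicit witness set whose elements avoid multiples of $3$, split as evenly as possible between the residues $1$ and $2$ modulo $3$, and have pairwise distinct three-element sums by a positional-numeral argument. The only difference is the witness itself --- you use $\{2^i : 1 \le i \le n\}$ with uniqueness read off from binary expansions and balance from $2 \equiv -1 \pmod 3$, whereas the paper uses $a_k = 3^{k+1}+1$ or $3^{k+1}+2$ with uniqueness read off from base-$3$ expansions; both work equally well.
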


\begin{proof}
  For $k \in \N$ set $a_k := 3^{k+1}+1$ for odd $k$ and $a_k := 3^{k+1}+2$ for
  even $k$, and with this $A_n := \{a_1,\dots,a_n\} \subset \T$ for $n \in \N$.

  Looking at its representation in base $3$ makes it obvious that $a+b+c$ for
  $a,b,c \in A_n, a<b<c$ is unique, i.\,e. $a+b+c \ne x+y+z$ for
  $a,b,c,x,y,z \in A_n$ with $a < b < c, x < y < z$ and $(a,b,c) \ne (x,y,z)$.
  The considerations in the first part of the proof of Proposition \ref{prop}
  show $|S_\T(A_n)| = A111384(n)$.
\end{proof}

Proposition \ref{prop} shows in particular that $s_\P(n) \le A111384(n)$ for
all $n \in \N$. From \cite{vps} it is known that $s_\P(n) = A111384(n)$ for
$n \le 9$, for example
\[
  |S_\P(\{499,1483,2777,4363,5237,5507,6043,6197\})| = 48 = A111384(8).
\]
This leads to
\begin{question}
   $s_\P(n) = A111384(n)$ for all $n \in \N$?
\end{question}

\end{document}